\DeclareMathAlphabet{\mathcal}{OMS}{cmsy}{m}{n}
\newcommand{\bx}{\bm{x}}
\newcommand{\by}{\bm{y}}
\newcommand{\bb}{\bm{b}}
\newcommand{\bc}{\bm{c}}
\newcommand{\sor}{\mathrm{SOR}}
\newcommand{\bbR}{\mathbb{R}}
\newcommand{\rmd}{\mathrm{d}}
\newcommand{\dg}{\nabla _\rmd}
\DeclareMathOperator*{\diag}{diag}
\theoremstyle{definition}
\newtheorem{proposition}{Proposition}
\newtheorem{definition}{Definition}
\newtheorem{theorem}{Theorem}
\begin{document}
\title{On the equivalence between SOR-type methods for linear systems and 
discrete gradient methods for gradient systems}
\author{
Yuto Miyatake\thanks{Department of Applied Physics, 
              Graduate School of Engineering, 
              Nagoya University,  Furo-cho, Chikusa-ku, 
              464-8603 Nagoya, Japan, 
\href{mailto:miyatake@na.nuap.nagoya-u.ac.jp}{miyatake@na.nuap.nagoya-u.ac.jp}} ,\
Tomohiro Sogabe\thanks{Department of Applied Physics, 
              Graduate School of Engineering, 
              Nagoya University,  Furo-cho, Chikusa-ku, 
              464-8603 Nagoya, Japan, 
\href{mailto:sogabe@na.nuap.nagoya-u.ac.jp}{sogabe@na.nuap.nagoya-u.ac.jp}}
\ and 
Shao-Liang Zhang\thanks{Department of Applied Physics, 
              Graduate School of Engineering, 
              Nagoya University,  Furo-cho, Chikusa-ku, 
              464-8603 Nagoya, Japan, 
\href{mailto:zhang@na.nuap.nagoya-u.ac.jp}{zhang@na.nuap.nagoya-u.ac.jp}}
}

\maketitle

\begin{abstract}
The iterative nature of many discretisation methods for continuous dynamical systems
has led to the study of
the connections between iterative numerical methods in numerical linear algebra and continuous dynamical systems.
Certain researchers have used the explicit Euler method
to understand this connection, but this method has its limitation.
In this study, we present a new connection between
successive over-relaxation (SOR)-type methods and gradient systems;
this connection is based on discrete gradient methods.
The focus of the discussion is the equivalence between SOR-type methods and
 discrete gradient methods applied to gradient systems.
The discussion leads to new interpretations for SOR-type methods.
For example,
we found a new way to derive
these methods; these methods monotonically decrease a certain quadratic function and
obtain a new interpretation of the relaxation parameter.
We also obtained a new discrete gradient while studying the new connection.
\end{abstract}

\section{Introduction}
\label{sec1}

Iterative numerical methods in numerical linear algebra
are widely used to solve 
important mathematical problems.
Many iterative numerical methods, particularly one-step processes, are formulated as
\begin{align}\label{eq:cds}
x^{(k+1)} = T (x^{(k)}), \quad k = 0,1,2,\dots 
\end{align}
with an initial vector $x^{(0)}=x_0 \in V$. Here $V$ is an appropriate vector space,
and $T:V\to V$ is a predetermined map.
Connections between iterative numerical methods in numerical linear algebra 
and continuous dynamical systems have been studied
(since 1970s)
because of the iterative nature of many discretisation methods
used for continuous dynamical systems.
These continuous dynamical systems are represented by 
ordinary differential equations (ODEs) of the form
\begin{align}\label{eq:dds}
\frac{\rmd}{\rmd t} x(t) = \tau (x(t)), \quad t>0
\end{align}
with the initial vector $x(0) = x_0\in V$, where $\tau: V\to V$.
For example, the explicit Euler scheme with unit stepsize $h=1$ 
applied to \eqref{eq:dds} is
\begin{align*}
x^{(k+1)} = x^{(k)} + \tau (x^{(k)}).
\end{align*}
Therefore, \eqref{eq:cds} and \eqref{eq:dds} can be linked by the following equality:
\begin{align*}
T (x^{(k)}) = x^{(k)} + \tau (x^{(k)}).
\end{align*}
In fact, such connections to continuous dynamical systems 
have been studied for many iterative numerical methods,
such as stationary iterative methods for systems of linear equations~\cite{ch88,ch08},
Newton's methods for systems of nonlinear equations~\cite{ag80,sm76},
QR algorithms for eigenvalue problems~\cite{dn83,sy81} and so on.
Additional details can be obtained from~\cite{ch88,ch08}.
Studying the connections
provides a better understanding of the iterative numerical methods. 
This improved understanding can help develop better iterative numerical methods
by using state-of-the-art numerical methods for ODEs
and by devising ODEs from the viewpoint of continuous dynamical systems.

In the case of stationary iterative methods for solving linear systems,
little has been understood about the connection to continuous dynamical systems.
Let us consider linear systems of the form
\begin{align}
A \bx = \bb 
\end{align}
for a given nonsingular matrix $A\in\bbR^{n\times n}$ and vector $\bb\in\bbR^n$
(hereinafter bold notation will be used to denote vectors).
In the discrete setting,
by introducing the splitting $A = M-N$, 
stationary iterative methods for solving linear systems are formulated as
\begin{align} \label{sim} 
\bx ^{(k+1)} = G \bx ^{(k)} + \bc, \quad k = 0,1,2,\dots ,
\end{align}
where $G = M^{-1}N$ and $\bc = M^{-1}\bb$.
It is clear from the formulation that $A^{-1} \bb$ is the unique equilibrium point of \eqref{sim}.
In the continuous setting,
a system of ODEs of the form
\begin{align} \label{eq:lode}
\frac{\rmd}{\rmd t} \bx (t) = -P ( A\bx(t) - \bb), \quad t>0 
\end{align}
with a nonsingular matrix $P\in \bbR^{n\times n}$
has the unique equilibrium point $A^{-1} \bb$.
The explicit Euler scheme applied to \eqref{eq:lode} with unit stepsize $h=1$
is written as
\begin{align*}
\bx^{(k+1)} = (I-PA) \bx ^{(k)} + P\bb, \quad k = 0,1,2,\dots.
\end{align*}
Therefore,
\eqref{sim} and 
\eqref{eq:lode} can be connected by the relationship
\begin{align} \label{s:rePG}
I - PA =  G
\end{align}
via the explicit Euler method with unit stepsize.
This approach is simple and some outcomes have been discussed in~\cite{ch88,ch08};
however, 
it would be difficult to obtain an in-depth understanding because of the following reasons.

\begin{enumerate}
\item[(a)] 
For many practical and important stationary iterative methods,
such as the successive over-relaxation (SOR) method,
the iteration matrix $G$ is complicated because it often involves parameters
and matrix inverses.
For such cases, the corresponding ODEs, 
that is, the expression of the matrix $P$, are as complicated as the stationary iterative methods.
This situation makes it difficult to understand the
stationary iterative methods
in a simple way from the viewpoint of the ODEs; therefore,
the above connection rarely offers significant insights.

\item[(b)]
A fundamental difference exists between the convergence conditions of 
the stationary iterative methods \eqref{sim} and the ODEs \eqref{eq:lode};
this difference has been  explained in~\cite{ch08}.
The sequence of a stationary iterative method of the form \eqref{sim}
converges to $A^{-1}\bb$ for any initial vector 
if and only if the spectral radius of $G$ is less than one, that is, $\rho (G) <1$.
However, the flow of a system of ODEs of the form \eqref{eq:lode}
converges to $A^{-1}\bb$ for any initial vector if and only if 
all eigenvalues of $PA$ lie in the open right half plane,
because the exact flow of \eqref{eq:lode}
with the initial vector $\bx_0$ is given by 
\begin{align*}
\bx (t) = \mathrm{e}^{-PAt} (\bx_0 - A^{-1} \bb ) + A^{-1}\bb.
\end{align*}
In addition to the difference of convergence conditions, 
differences also exist between
preferable situations pertaining to quick convergence.
The matrix $G$ in the stationary iterative methods of the form \eqref{sim} 
is ideally chosen such that $\rho (G)$,
the spectral radius of $G$, is as small as possible.
However,
the matrix $P$ in ODEs of the form \eqref{eq:lode} is ideally chosen such that 
the real part of 
all eigenvalues of $PA$ is as large as possible.
These two requirements are not usually satisfied simultaneously under \eqref{s:rePG}.
\end{enumerate}

Given these consideration, our aim is to seek a new possible connection between 
the stationary iterative methods and the ODEs such that 
meaningful insights could be obtained, and 
there is no difference of convergence conditions.
In particular, we focus on SOR and related methods,\footnote{Note that although these are classical stationary iterative methods, 
they are still receiving a great deal of attention.
For recent developments, see~\cite{ng16} and~\cite{oz17}.}
and discuss their connections to certain gradient systems. 
Thus,
we restrict $A$ to be symmetric positive definite.
Let
\begin{align}
f(\bx) = \frac{1}{2} \bx^\top A \bx - \bx^\top \bb.
\end{align}
We consider the stationary iterative methods with the property $f(\bx^{(k+1)}) \leq f(\bx^{(k)})$
and linear ODEs with the property $\frac{\rmd}{\rmd t} f(\bx (t)) \leq 0$;
we make a new connection between the two classes.
The main idea is to use the discrete gradient method~\cite{go96,ia88,mq99,qm08,qt96} 
instead of the explicit Euler method
to obtain a fresh understanding of the stationary iterative methods.
In the rest of the paper, we start the discussion with the continuous case.
In Section~\ref{sec2},
we consider gradient systems as a subclass of linear ODEs and review the 
basic properties of gradient systems as preliminaries.
In Section~\ref{sec3}, we review the discrete gradient method
and show that the discrete gradient schemes applied to gradient systems
constitute a certain subclass of stationary iterative methods.
We also show that the SOR method is connected to a certain gradient system
by proving the equivalence between the SOR method and the discrete gradient method;
the consequences of this equivalence are also discussed. 
In Section~\ref{sec4}, we show that some SOR-type methods are also connected to gradient systems.
Finally, Section~\ref{sec:cr} contains concluding remarks.

\section{Preliminaries}
\label{sec2}
We first summarise the basic properties of gradient systems in the general setting
and then explain the specific gradient systems associated with symmetric positive definite linear systems.

\subsection{Properties of gradient systems}
Given a differentiable function $f:\bbR^n\to\bbR$,
we consider the gradient systems of the form
\begin{align}\label{eq:odef}
\frac{\rmd}{\rmd t} \bx (t) = - P \nabla f(\bx (t)), \quad t>0
\end{align}
with an initial vector $\bx (0) = \bx_0\in\bbR^n$,
where 
$P\in\bbR^{n\times n}$ is an arbitrary symmetric positive definite matrix.\footnote{Usually,
the system of the form \eqref{eq:odef} is called a gradient system only when $P=I$;
however, in this paper we consider \eqref{eq:odef} as a gradient system as long as the matrix $P$ is symmetric 
positive definite.}
Several properties for gradient systems are known.
First, gradient systems given by \eqref{eq:odef} are dissipative in the sense that 
the function $f(\bx(t))$ is nonincreasing along the solution:
\begin{align}\label{c:dp}
\frac{\rmd}{\rmd t} f (\bx (t)) =  \nabla f (\bx (t)) ^\top \frac{\rmd}{\rmd t} \bx (t) =
-  \nabla f (\bx (t)) ^\top P \nabla f (\bx (t)) \leq 0.
\end{align}
Here, the first equality is just the chain rule.
The second equality follows from the 
substitution of \eqref{eq:odef}, 
and the last inequality follows from the symmetric positive definiteness of the matrix $P$.
Note that $\frac{\rmd}{\rmd t} f(\bx(t)) < 0$ unless $\nabla f = \bm{0}$.                                                       
We call 
the function $f$ `the energy function,' and the property \eqref{c:dp} `the energy dissipation property.'
Second, 
a clear relationship is observed between the optimisation problems and gradient systems
if we make additional assumptions on the energy function $f$.
To explain the relationship, we recall some definitions.

\begin{definition}
A function $f:\bbR^n\to\bbR$ is called 
\begin{itemize}
\item convex if and only if for all $\bx,\by\in\bbR^n$ and $\lambda\in [0,1]$,
\begin{align*}
f(\lambda \bx + (1-\lambda ) \by) \leq \lambda f(\bx) + (1-\lambda) f(\by).
\end{align*}
\item strictly convex if and only if for all $\bx,\by\in\bbR^n$ ($\bx\neq\by$) and $\lambda\in (0,1)$,
\begin{align*}
f(\lambda \bx + (1-\lambda ) \by) < \lambda f(\bx) + (1-\lambda) f(\by).
\end{align*}
\item coercive if and only if $f(\bx_n)\to\infty$ for $\| \bx_n\| \to \infty$.
\end{itemize}
\end{definition}

If the function $f$ is strictly convex and coercive, 
the flow $\bx(t)$ to the gradient system \eqref{eq:odef}  converges to the equilibrium
as $t\to\infty$ for any initial vector $\bx_0$
as an immediate consequence of the energy-dissipation property \eqref{c:dp}.
Note that the equilibrium of the gradient system \eqref{eq:odef} is the unique optimal solution to
the unconditioned optimisation problem
\begin{align}\label{eq:opt}
\min_{\bx\in\bbR^n} f(\bx),
\end{align}
or is the solution to $\nabla f(\bx)=\bm{0}$.
This discussion is summarised in the following proposition.

\begin{proposition}[Continuous case~\cite{hs13,sh96}] \label{thm:co}
Let a differentiable function $f:\bbR^n\to\bbR$ be strictly convex and coercive.
Then, the exact flow of the gradient system \eqref{eq:odef}
converges to the unique minimizer of the function $f$ for any initial vector $\bx_0$, 
that is,
\begin{align}
\lim _{t\to\infty} \bx (t) = \arg\min _{\bx\in\bbR^n} f(\bx).
\end{align}
\end{proposition}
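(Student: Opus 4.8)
The plan is to derive the convergence from the energy-dissipation identity \eqref{c:dp} together with the elementary consequences of strict convexity and coercivity, and then to promote convergence of the energy $f(\bx(t))$ to convergence of the trajectory $\bx(t)$ itself via a LaSalle-type argument. First I would record that $f$ has a unique global minimiser $\xast$: coercivity makes each sublevel set $S_c=\{\bx\in\bbR^n : f(\bx)\le c\}$ bounded, continuity makes it closed, hence compact, so a minimiser exists by the Weierstrass theorem; strict convexity makes it unique; and since $f$ is differentiable, $\nabla f(\xast)=\bm{0}$, so $\xast$ solves the optimisation problem \eqref{eq:opt}. Because $P$ is nonsingular, $-P\nabla f(\bx)=\bm{0}$ is equivalent to $\nabla f(\bx)=\bm{0}$, so $\xast$ is in fact the unique equilibrium of the gradient system \eqref{eq:odef}.

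Next I would pin down the flow. By \eqref{c:dp} the map $t\mapsto f(\bx(t))$ is nonincreasing, so the trajectory stays inside the compact sublevel set $S_{f(\bx_0)}$; standard ODE theory then gives that the solution exists for all $t\ge 0$ and that $\{\bx(t):t\ge 0\}$ is bounded. Moreover $f(\bx(t))$ is nonincreasing and bounded below by $f(\xast)$, so it converges to some limit $f_\infty\ge f(\xast)$ as $t\to\infty$.

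The third step, which I expect to be the main obstacle, is to show $\bx(t)\to\xast$: the difficulty is that energy dissipation by itself does not identify the limit, and one must rule out the trajectory drifting off or accumulating at a non-critical point. I would argue by compactness. Given any $t_k\to\infty$, pass to a subsequence with $\bx(t_k)\to\bar{\bx}$. For each fixed $s\ge 0$, continuous dependence on initial data gives $\bx(t_k+s)\to\by(s)$, where $\by$ solves \eqref{eq:odef} with $\by(0)=\bar{\bx}$; since $f(\bx(t))\to f_\infty$, taking limits yields $f(\by(s))=f_\infty$ for all $s\ge 0$. Then $\frac{\rmd}{\rmd s} f(\by(s))=0$, and \eqref{c:dp} gives $\nabla f(\by(s))^\top P\,\nabla f(\by(s))=0$; since $P$ is symmetric positive definite this forces $\nabla f(\by(s))=\bm{0}$, in particular $\nabla f(\bar{\bx})=\bm{0}$, whence $\bar{\bx}=\xast$ by uniqueness of the critical point. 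Since the trajectory is bounded and every subsequential limit equals $\xast$, we conclude $\lim_{t\to\infty}\bx(t)=\xast=\argmin_{\bx\in\bbR^n} f(\bx)$.

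Finally, I would remark that this is precisely LaSalle's invariance principle applied with the Lyapunov function $\bx\mapsto f(\bx)-f(\xast)$, whose dissipation rate $\nabla f(\bx)^\top P\nabla f(\bx)$ vanishes only at $\bx=\xast$. As an alternative to the ODE-continuation step for boundedness, one can directly monitor $V(t)=\frac{1}{2}\bigl(\bx(t)-\xast\bigr)^\top P^{-1}\bigl(\bx(t)-\xast\bigr)$, noting that $\dot V(t)=-\bigl(\bx(t)-\xast\bigr)^\top\nabla f(\bx(t))<0$ whenever $\bx(t)\neq\xast$, by strict monotonicity of $\nabla f$ (a consequence of strict convexity) together with $\nabla f(\xast)=\bm{0}$; this gives boundedness for free and a second route to the conclusion.
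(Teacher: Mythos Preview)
The paper does not supply a proof of this proposition: it is stated with citations and preceded only by the informal remark that convergence is ``an immediate consequence of the energy-dissipation property \eqref{c:dp}.'' Your proposal is correct and simply fleshes out that remark into a full argument: existence and uniqueness of the minimiser from coercivity and strict convexity, forward invariance of the compact sublevel set $S_{f(\bx_0)}$ giving global existence and boundedness, convergence of $f(\bx(t))$ by monotonicity, and identification of the limit via a LaSalle/\,$\omega$-limit-set argument using \eqref{c:dp}. This is exactly the standard route taken in the cited references, so your approach is in line with what the paper invokes rather than different from it. One small technical point worth noting for completeness: your appeal to ``continuous dependence on initial data'' in the LaSalle step tacitly uses uniqueness of solutions, hence local Lipschitz continuity of $\nabla f$; this is automatic here because a convex differentiable function on $\bbR^n$ is necessarily $C^1$, and on the compact set $S_{f(\bx_0)}$ the gradient of a strictly convex $C^1$ function is uniformly continuous, which suffices for the argument (or one may bypass this entirely via your alternative Lyapunov function $V$).
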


\subsection{Gradient systems associated with symmetric positive definite linear systems}

To consider symmetric positive definite linear systems,\footnote{In this paper, we focus on symmetric positive definite linear systems just for simplicity; 
the discussion also applies for positive definite matrices.}
let us focus on the energy function
\begin{align}\label{eq:f}
f(\bx) = \frac12 \bx ^\top A \bx -\bx ^\top \bb 
\end{align}
for a given symmetric positive definite matrix $A\in\bbR^{n\times n}$ and vector $\bb\in\bbR^n$.
Note that this function $f$ is strictly convex and coercive;
therefore,
the minimiser of the optimization problem \eqref{eq:opt} is unique
and coincides with the exact solution to the linear system
\begin{equation} \label{eq:ls}
A \bx = \bb.
\end{equation}
Furthermore, the corresponding gradient system is written as
\begin{align}\label{eq:ode}
\frac{\rmd}{\rmd t} \bx (t) = - P \nabla f(\bx (t))
= -P (A\bx(t) - \bb).
\end{align}
It is clear from Proposition~\ref{thm:co} that as $t\to\infty$,
 the exact flow $\bx (t)$ converges to
the exact solution of the linear system \eqref{eq:ls}.

\section{New connection based on the discrete gradient method}
\label{sec3}
This section explains a new connection
between the stationary iterative methods with the discrete energy-dissipation 
property $f(\bx^{(k+1)}) \leq f(\bx^{(k)})$
and the gradient systems that always have the energy-dissipation property 
$\frac{\rmd}{\rmd t} f(\bx (t)) \leq 0$.
To make an intended connection, 
we need to consider numerical methods that inherit the energy-dissipation property
when applied to gradient systems.
The discrete gradient method possesses this property;
therefore,
we employ this method instead of the explicit Euler method
to understand the stationary iterative methods from a fresh perspective.

We review the discrete gradient method in the general setting in Subsection~\ref{subsec:dg}
and apply the discrete gradient method to the gradient system \eqref{eq:ode}
to show the resulting schemes that constitute a subclass of stationary iterative methods
with the discrete energy-dissipation 
property $f(\bx^{(k+1)}) \leq f(\bx^{(k)})$
 in Subsection~\ref{subsec:framework}.
The discrete gradient scheme coincided with the SOR method, and 
the equivalence is discussed in Subsection~\ref{subsec:sor}.
In Subsection~\ref{subsec:im}, we discuss the outcomes of the new connection.

\subsection{Discrete gradient method}
\label{subsec:dg}

The discrete gradient method for general gradient systems
is briefly reviewed.
Additional details are available in~\cite{go96,ia88,mq99,qm08,qt96}.\footnote{
In these references, the discrete gradient method has been developed to derive 
energy-preserving schemes for conservative systems, such as Hamiltonian systems.
However, it is also known that 
the method can be used to derive energy dissipative schemes
for gradient systems~\cite{fu99,hl14,mf14,mqr98}.
For a general description on numerical methods preserving some
underlying geometric properties of differential equations, see~\cite{hlw06}.}
The key idea is to devise the discretisation of the gradient.
We first define a
discrete gradient $\dg f$ as follows.

\begin{definition}[Discrete gradient~\cite{go96}]
Let $f:\bbR^n\to\bbR$ be continuously differentiable.
The function $\dg f: \bbR^n \times \bbR^n \to \bbR$
is called a discrete gradient if it satisfies
\begin{align}
f(\bx) - f(\by) &=  \dg f(\bx,\by)^\top (\bx-\by), \label{dgcond1} \\
\dg f(\bx,\bx) &= \nabla f(\bx) \label{dgcond2}
\end{align}
for all $\bx, \ \by \in \bbR^n$.
\end{definition}

The first condition, which is given by \eqref{dgcond1}, corresponds to the chain rule;
it is essential and is called the discrete chain rule.
The second condition, which is given by \eqref{dgcond2}, merely requires that the discrete gradient is 
an approximation to the gradient.
Note that $\dg f$ is a function 
that corresponds to the gradient as a function $\nabla f: \bbR^n \to \bbR^n$.

Let us leave aside the construction of concrete discrete gradients for the moment
and
assume that an appropriate discrete gradient has been found. 
Thereafter, for the gradient system \eqref{eq:odef}, the 
discrete gradient scheme is given as follows:
\begin{align} \label{dgscheme}
\frac{\bx^{(k+1)} - \bx ^{(k)}}{h} = - P \dg f(\bx^{(k+1)}, \bx^{(k)} ), \quad k=0,1,2,\dots.
\end{align}
For the solution to the scheme~\eqref{dgscheme},
the discrete energy-dissipation holds:
\begin{align}
\frac{1}{h} \left( f(\bx^{(k+1)}) - f(\bx^{(k)}) \right)
=   \dg f(\bx^{(k+1)}, \bx^{(k)} ) ^\top \frac{\bx^{(k+1)} - \bx ^{(k)}}{h}  
= - \dg f(\bx^{(k+1)}, \bx^{(k)} ) ^\top P \dg f(\bx^{(k+1)}, \bx^{(k)} ) \leq 0. \label{d:dp}
\end{align}
Here, the first equality is just the discrete chain rule;
the second equality follows from the substitution of the scheme~\eqref{dgscheme}.
The last inequality arises from the positive definiteness of the matrix $P$.
Hence, for the solution to the discrete gradient scheme \eqref{dgscheme},
the following property holds:

\begin{proposition}[Discrete case~\cite{gr17}] \label{prop:d}
Let a differentiable function $f:\bbR^n\to\bbR$ be strictly convex and coercive.
Then, the sequence $\{ \bx ^{(k)}\}_{k=0}^\infty$ obtained by the discrete gradient 
scheme given by~\eqref{dgscheme} converges to the unique minimiser of the function $f$ for any initial vector $\bx_0$,
that is,
\begin{align}
\lim _{k\to\infty} \bx^{(k)} = \arg\min _{\bx\in\bbR^n} f(\bx).
\end{align}
\end{proposition}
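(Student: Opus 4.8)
The plan is to run the classical convergence argument for monotone descent iterations, adapted to the discrete gradient scheme, in four stages: (i) show that the energies $f(\bx^{(k)})$ form a convergent real sequence; (ii) deduce from the discrete energy-dissipation identity that the discrete gradients $\dg f(\bx^{(k+1)},\bx^{(k)})$, and hence the increments $\bx^{(k+1)}-\bx^{(k)}$, tend to $\bm{0}$; (iii) use coercivity to extract a convergent subsequence of the iterates and identify its limit with the unique minimiser $\xast$ of $f$; (iv) upgrade subsequential convergence to convergence of the whole sequence.

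For stage (i) I would first recall that a strictly convex and coercive (hence continuous) function on $\bbR^n$ has a unique minimiser $\xast$, characterised by $\nabla f(\xast)=\bm{0}$, with $f(\xast)=\min_{\bx\in\bbR^n}f(\bx)$. The identity \eqref{d:dp} then shows that $\{f(\bx^{(k)})\}_{k\ge0}$ is non-increasing; being bounded below by $f(\xast)$, it converges to some $f^\ast\ge f(\xast)$. For stage (ii) I would telescope \eqref{d:dp} to obtain
\[
\sum_{k=0}^{K-1} h\,\dg f(\bx^{(k+1)},\bx^{(k)})^\top P\,\dg f(\bx^{(k+1)},\bx^{(k)}) = f(\bx^{(0)})-f(\bx^{(K)}) \le f(\bx^{(0)})-f^\ast ,
\]
so that the nonnegative series on the left converges and its general term vanishes; since $P$ is symmetric positive definite with smallest eigenvalue $\lambda_{\min}>0$, one has $\dg f(\bx^{(k+1)},\bx^{(k)})^\top P\,\dg f(\bx^{(k+1)},\bx^{(k)}) \ge \lambda_{\min}\,\|\dg f(\bx^{(k+1)},\bx^{(k)})\|^2$, whence $\dg f(\bx^{(k+1)},\bx^{(k)})\to\bm{0}$; substituting this into \eqref{dgscheme} gives $\bx^{(k+1)}-\bx^{(k)}=-hP\,\dg f(\bx^{(k+1)},\bx^{(k)})\to\bm{0}$.

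For stage (iii), monotonicity of the energies keeps every iterate in the sublevel set $\{\bx\in\bbR^n:f(\bx)\le f(\bx^{(0)})\}$, which is closed by continuity of $f$ and bounded by coercivity, hence compact; therefore $\{\bx^{(k)}\}$ has a subsequence with $\bx^{(k_j)}\to\bx^\infty$, and by stage (ii) also $\bx^{(k_j+1)}\to\bx^\infty$. Passing to the limit in $\dg f(\bx^{(k_j+1)},\bx^{(k_j)})\to\bm{0}$ and using joint continuity of the discrete gradient together with the consistency condition \eqref{dgcond2} gives $\nabla f(\bx^\infty)=\dg f(\bx^\infty,\bx^\infty)=\bm{0}$, so $\bx^\infty=\xast$ by strict convexity. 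Since the bounded sequence $\{\bx^{(k)}\}$ thus has $\xast$ as its only possible subsequential limit, stage (iv) is immediate: the whole sequence converges to $\xast$.

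The step I expect to be the main obstacle is the limit passage in stage (iii), which relies on the discrete gradient being jointly continuous — a property not required by the definition of a discrete gradient but enjoyed by all standard constructions, in particular by the Gonzalez~\cite{go96} and Itoh--Abe~\cite{ia88} discrete gradients and by the one used below to recover the SOR iteration. An alternative that keeps $\dg f$ out of the final argument is to prove directly that $f^\ast=f(\xast)$: the nested compact sublevel sets $\{\bx\in\bbR^n:f(\bx)\le f(\xast)+\varepsilon\}$ shrink to $\{\xast\}$ as $\varepsilon\downarrow0$, so their diameters tend to $0$ and $f(\bx^{(k)})\downarrow f(\xast)$ already forces $\bx^{(k)}\to\xast$; but excluding the case $f^\ast>f(\xast)$ still requires converting $\dg f(\bx^{(k+1)},\bx^{(k)})\to\bm{0}$ into information about $\nabla f$ on the relevant compact region, so some regularity of the discrete gradient is unavoidable in any case.
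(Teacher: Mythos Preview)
The paper does not actually prove this proposition: immediately after stating it, the text says ``A rigorous proof for this is given in~\cite{gr17}'' and moves on. There is therefore no in-paper argument to compare your proposal against.

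Your four-stage descent argument is the standard one and is essentially correct under the extra hypothesis you yourself flag: joint continuity of $\dg f$ near the diagonal, so that $\dg f(\bx^{(k_j+1)},\bx^{(k_j)})\to\dg f(\bx^\infty,\bx^\infty)=\nabla f(\bx^\infty)$. You are right that this is not part of the definition of a discrete gradient and has to be assumed separately (or verified for the specific construction at hand); the Itoh--Abe formula, for instance, is only continuous at the diagonal after one interprets the difference quotients there as partial derivatives, which requires $f$ to be $C^1$. Since the paper defers the proof entirely to~\cite{gr17}, your sketch, with this caveat made explicit, is an adequate substitute.
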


A rigorous proof for this is given in~\cite{gr17}.
This property is a discrete counterpart of Proposition~\ref{thm:co},
which indicates that any discrete gradient scheme for gradient systems with a strictly 
convex and coercive function $f$ always converges to the equilibrium point independently of the stepsize $h$.
As a corollary, discrete gradient schemes for the gradient system of the form \eqref{eq:ode}
are iterative algorithms for the linear system \eqref{eq:ls} with unconditional convergence properties
in terms of the choice of $h$.

Let us reconsider the construction of discrete gradients.
In general, this construction is not unique.
Below,
we list three approaches.

\begin{itemize}
\item Itoh and Abe~\cite{ia88} defined the discrete gradient as follows:
\begin{align}\def\arraystretch{1.3}
\dg f(\bx,\by) = 
\begin{bmatrix}
\cfrac{f(x_1,y_2,\dots,y_n)-f(y_1,\dots, y_n)}{x_1-y_1} \\
\cfrac{f(x_1,x_2,y_3\dots,y_n)-f(x_1,y_2,\dots, y_n)}{x_2-y_2} \\
\vdots\\
\cfrac{f(x_1,\dots,x_n)-f(x_1,\dots,x_{n-1}, y_n)}{x_n-y_n}
\end{bmatrix} . \label{eq:ia}
\end{align}
This approach is derivative free; therefore, the resulting discrete gradient scheme is computationally 
cheaper than others.
We note that the resulting scheme has only first order accuracy.\footnote{
An ODE solver is said to be of order $p$
if $\| \bx^{(1)}  - \bx(h) \| = O (h^{p+1})$ holds.
}
\item Gonzalez~\cite{go96} defined the discrete gradient as follows:
\begin{align} \label{gdg}
\dg f(\bx,\by) = \nabla f \left( \frac{\bx+\by}{2} \right) +
\frac{f(\bx)-f(\by) -\nabla f \left( \frac{\bx+\by}{2} \right) ^\top (\bx-\by) }{\| \bx-\by\| ^2} (\bx-\by).
\end{align}
This discrete gradient is a modification of the midpoint rule.
The resulting scheme is second order, but the implementation could be cumbersome because of
the term $\| \bx-\by\|^2$ appearing in the denominator.
\item The average vector field discrete gradient~\cite{hl83,mq99,qm08} is given
as follows:
\begin{align}\label{avf}
\dg f(\bx,\by) = 
\int_0^1 \nabla f (\xi \bx + (1-\xi) \by)\,\mathrm{d}\xi.
\end{align}
This approach is also second order, and 
we do not need to know the function $f$.
The resulting scheme is fully implicit, although the implementation effort is usually  relaxed 
compared with Gonzalez's approach.
\end{itemize}
For other approaches, see~\cite{mq99}
and the references therein.

\subsection{Discrete gradient schemes for gradient systems}
\label{subsec:framework}
According to Proposition~\ref{prop:d},
the sequence $\{ \bx ^{(k)} \}_{k=0}^\infty$ of every discrete gradient scheme 
converges to the exact solution of linear systems
independently of the stepsize $h$ (as long as $h>0$).
Therefore, discrete gradient schemes constitute a certain subclass of stationary iterative methods
with the discrete energy-dissipation property $f(\bx^{(k+1)} ) \leq f(\bx ^{(k)})$.
However, it is not clear whether all stationery iterative methods with the energy-dissipation property
can be regarded as discrete gradient schemes;
this concern will be addressed in our future work.
In this subsection, we determine whether
the subclass embraces practical and important stationary iterative methods.
Therefore,
we need to 
obtain concrete discrete gradients for the energy function \eqref{eq:f}
to formulate concrete discrete gradient schemes.

For the energy function \eqref{eq:f},
Gonzalez's method \eqref{gdg} and the average vector field method \eqref{avf} give the same
discrete gradient $\dg f (\bx,\by) = A (\bx + \by)/2 - \bb$.
The corresponding discrete gradient scheme is given as follows:
\begin{align} \label{midpoint}
\frac{\bx^{(k+1)} - \bx ^{(k)}}{h} = -P\left(A \frac{\bx^{(k+1)} +\bx^{(k)}}{2} - \bb \right).
\end{align}
A linear system with the coefficient matrix $I+hPA/2$ should be solved at each time step;
therefore, \eqref{midpoint} is not practical in general (see~\cite{ch88} 
for the connection to the Pad\'e approximation).

Another possibility is to employ the Itoh--Abe discrete gradient \eqref{eq:ia}.
In this case,
the $i$th component of the discrete gradient is calculated as
\begin{align}
(\dg f(\bx,\by))_i &= \frac{f(x_1,\dots,x_i,y_{i+1},\dots,y_n)-f(x_1,\dots,x_{i-1},y_i,\dots, y_n)}{x_i-y_i} \\
&= \sum_{j<i} a_{ij}x_j + a_{ii}\frac{x_i+y_i}{2} + \sum_{j>i} a_{ij}y_j - b_i,
\end{align}
where $\sum_{j<i}$ and $\sum_{j>i}$ are the abbreviations of 
$\sum_{j=1}^{i-1}$ and $\sum_{j=i+1}^n$, respectively.
Thereafter, for the most natural choice $P=I$, the discrete gradient scheme
\eqref{dgscheme} is written
in a component-wise fashion as
\begin{align}
\frac{x_i^{(k+1)} - x_i^{(k)}}{h} &= 
- (\dg f(\bx^{(k+1)}, \bx^{(k)} ) )_i  \\
&=
-\left( \sum_{j<i} a_{ij}x_j^{(k+1)} + a_{ii}\frac{x_i^{(k+1)}+x_i^{(k)}}{2} + \sum_{j>i} a_{ij}x_j^{(k)} - b_i \right),
\end{align}
which can also be expressed as
\begin{align} \label{ourscheme1}
x_i^{(k+1)} = \frac{1}{1+\frac{h}{2}a_{ii}}
\left[ -h\sum_{j<i} a_{ij}x_j^{(k+1)} + \left( 1-\frac{h}{2}a_{ii}\right) x_i^{(k)} - h \sum_{j>i} a_{ij}x_j^{(k)} + hb_i \right].
\end{align}
In this case, 
a linear system does not need to be solved for each iteration.

Let us also consider the case when $P = D^{-1}$, where $D:=\diag (a_{11}, a_{22},\dots,a_{nn})$.
Note that
the symmetric positive definiteness of the matrix $A$ indicates $a_{ii}>0$;
therefore,
$P$ is (symmetric) positive definite. In this case,
the Itoh--Abe discrete gradient scheme is given by
\begin{align} 
x_i^{(k+1)} 
 = \frac{1}{1+\frac{h}{2}}
\left[ -ha_{ii}^{-1}\sum_{j<i} a_{ij}x_j^{(k+1)} + \left( 1-\frac{h}{2}\right) x_i^{(k)} 
 - h a_{ii}^{-1}\sum_{j>i} a_{ij}x_j^{(k)} + ha_{ii}^{-1} b_i \right]. \label{ourscheme1d}
\end{align}

\subsection{Equivalence between the scheme given by {\rm (\ref{ourscheme1d})} and the SOR method}
\label{subsec:sor}

The Itoh--Abe discrete gradient scheme \eqref{ourscheme1d} is equivalent to the SOR method.
This subsection explains the equivalence.

Let us express matrix $A$ as the matrix sum
\begin{align}\label{DLU}
A = D + L + U,
\end{align}
where $D=\diag (a_{11}, a_{22},\dots,a_{nn})$,
and $L$ and $U$ are strictly lower and upper triangular $n\times n$ matrices, respectively.
The Itoh--Abe discrete gradient scheme \eqref{ourscheme1d} is a stationary iterative method of the form
\begin{align} \label{eq:sor}
\bx^{(k+1)}= G\bx^{(k)} + \bc,
\end{align}
where $G$ and $\bc$ are expressed as
\begin{align}
G_{\mathrm{DG}}&= \left[ \left( I + \frac{h}{2}\right) D + hL\right]^{-1} \left[\left( I-\frac{h}{2}\right)D-hU\right],  \label{eq:m}\\
\bc_{\mathrm{DG}} &= h \left[ \left( I + \frac{h}{2}\right)D + hL\right]^{-1}  \bb \label{eq:c}.
\end{align}
For the SOR method, the iterative matrix and vector can be written as
\begin{align}
G_\sor&= \left( D + \omega L\right)^{-1} \left[(1-\omega)D-\omega U\right], \\
\bc_\sor &= \omega \left( D + \omega L\right)^{-1}  \bb.
\end{align} 
This proves that the scheme \eqref{ourscheme1d}
is equivalent to the SOR method
in the following sense:

\begin{theorem}
\label{th:eq}
The Itoh--Abe discrete gradient scheme \eqref{ourscheme1d} and the SOR method are equivalent
in the sense that $G_{\mathrm{DG}}=G_\sor$ and $\bc_{\mathrm{DG}} = \bc_\sor$
if the relationship between the two parameters is given as follows:
\begin{align}\label{condh}
h = \frac{2\omega}{2-\omega}.
\end{align}
\end{theorem}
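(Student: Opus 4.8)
The plan is to verify the equivalence by direct algebraic manipulation, showing that the substitution $h = 2\omega/(2-\omega)$ transforms the discrete-gradient iteration matrices $G_{\mathrm{DG}}$ and $\bc_{\mathrm{DG}}$ into the SOR matrices $G_\sor$ and $\bc_\sor$. First I would note that since $2-\omega>0$ is needed for $h>0$, we implicitly restrict $\omega\in(0,2)$, which is exactly the classical admissible range for SOR; conversely, solving \eqref{condh} for $\omega$ gives $\omega = 2h/(2+h)$, so the map between parameters is a bijection of $(0,\infty)$ onto $(0,2)$. I would record the two elementary identities that fall out of \eqref{condh}:
\begin{align*}
1 + \frac{h}{2} = \frac{2}{2-\omega}, \qquad 1 - \frac{h}{2} = \frac{2(1-\omega)}{2-\omega},
\end{align*}
together with $h = \frac{2\omega}{2-\omega}$ itself. (Here I am reading the scalar expressions $I\pm\tfrac{h}{2}$ in \eqref{eq:m}--\eqref{eq:c} as $(1\pm\tfrac{h}{2})I$, i.e. scalar multiples of the identity.)

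Next I would substitute these into \eqref{eq:m}. The bracket being inverted becomes
\begin{align*}
\left(I+\frac{h}{2}\right)D + hL = \frac{2}{2-\omega}D + \frac{2\omega}{2-\omega}L = \frac{2}{2-\omega}\bigl(D+\omega L\bigr),
\end{align*}
and the other bracket becomes
\begin{align*}
\left(I-\frac{h}{2}\right)D - hU = \frac{2(1-\omega)}{2-\omega}D - \frac{2\omega}{2-\omega}U = \frac{2}{2-\omega}\bigl((1-\omega)D-\omega U\bigr).
\end{align*}
The common scalar factor $\tfrac{2}{2-\omega}$ cancels between the inverse and the right-hand factor, leaving exactly $G_{\mathrm{DG}} = (D+\omega L)^{-1}\bigl((1-\omega)D-\omega U\bigr) = G_\sor$. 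For the vector, the same computation gives
\begin{align*}
\bc_{\mathrm{DG}} = h\cdot\frac{2-\omega}{2}\,(D+\omega L)^{-1}\bb = \frac{2\omega}{2-\omega}\cdot\frac{2-\omega}{2}\,(D+\omega L)^{-1}\bb = \omega(D+\omega L)^{-1}\bb = \bc_\sor,
\end{align*}
which completes the verification. One should also check that $D+\omega L$ is invertible so that both sides are well defined: since $L$ is strictly lower triangular and $D$ has positive diagonal entries, $D+\omega L$ is lower triangular with nonzero diagonal, hence nonsingular for every $\omega$.

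Honestly, there is no serious obstacle here: the statement is an identity and the proof is a substitution. The only place that requires a moment of care is the bookkeeping of which bracketed expressions are scalar ($I\pm\tfrac{h}{2}$, $1-\omega$, $1-\tfrac{h}{2}$) versus matrix ($D$, $L$, $U$), and making sure that the scalar prefactor $\tfrac{2}{2-\omega}$ really does factor out of both the $D$-term and the $L$- (resp.\ $U$-) term so that it cancels cleanly — this works precisely because the relation \eqref{condh} was chosen to make $hL$ scale the same way as $(1+\tfrac{h}{2})D$. I would present the argument compactly, displaying the two identities for $1\pm\tfrac{h}{2}$, substituting them into \eqref{eq:m}--\eqref{eq:c}, and reading off the equalities $G_{\mathrm{DG}}=G_\sor$, $\bc_{\mathrm{DG}}=\bc_\sor$; a closing remark that the parameter correspondence is a bijection between $h\in(0,\infty)$ and $\omega\in(0,2)$ ties the result back to the classical SOR convergence theory.
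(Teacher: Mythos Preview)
Your argument is correct and follows exactly the approach of the paper, which simply states that the equalities are checked by substituting \eqref{condh} into \eqref{eq:m} and \eqref{eq:c}; you have merely written out the substitution in full and added the harmless observations about invertibility of $D+\omega L$ and the bijection $h\leftrightarrow\omega$.
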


\begin{proof}
$G_{\mathrm{DG}}=G_\sor$ and $\bc_{\mathrm{DG}} = \bc_\sor$ are checked by substituting
\eqref{condh} into \eqref{eq:m} and \eqref{eq:c}.
\end{proof}

\subsection{Discussion}
\label{subsec:im}

We discuss the consequences of the new connection between
the gradient systems and
the
stationary iterative methods with the discrete energy-dissipation property,
by contrasting them with the standard connection.
The new connection overcomes the shortcomings mentioned in Section~\ref{sec1}.

\begin{enumerate}
\item[(a)]
The new connection offers some new insights, particularly for the SOR method.
It is well known that the convergence condition of the SOR method in terms of the relaxation
parameter $\omega$ is $\omega \in (0,2)$~\cite{go13,va00}.
Theorem~\ref{th:eq} implies that the SOR method with $0<\omega<2$ can be 
regarded as a stationary iterative method with the unconditional convergence property
in terms of the stepsize $h$.
It is now clear that the SOR method has the energy-dissipation property,
and the SOR method can be derived on the basis of the energy-dissipation
principle,
in contrast to the standard introduction of the SOR method as an extension of 
the Gauss--Seidel method.
The energy-dissipation is followed even for the update of each component:
\begin{align*}
& f(x_1^{(k+1)}, \dots, x_i^{(k+1)}, x_{i+1}^{(k)},\dots, x_n^{(k)} ) 
- f(x_1^{(k+1)}, \dots, x_{i-1}^{(k+1)}, x_i^{(k)},\dots, x_n^{(k)} ) \\
& \quad =
-\frac{1}{a_{ii}} \frac{h}{(1+\frac{h}{2})^2}
\left( \sum_{j<i} a_{ij} x_j^{(k+1)} + a_{ii} x_i^{(k)} + \sum_{j>i} a_{ij}x_j^{(k)} - b_i \right)^2 \leq 0.
\end{align*}
If the stepsize $h$ is chosen as $h=2$ or equivalently as $\omega=1$ 
because of
\eqref{condh},
then the decrement is locally maximised; this case coincides with the Gauss--Seidel method.

\item[(b)] The discrete gradient schemes converge to the solution of linear systems,
which will be exemplified in the next section.
Furthermore, one can freely choose any of the symmetric positive definite matrices $P$,
in contrast to the standard connection based on the explicit Euler method.
As long as the matrix $P$ is symmetric positive definite and 
the discrete gradient method is being used,
convergence is always guaranteed for 
the resulting discrete gradient schemes.
\end{enumerate}

\section{SOR-type methods}
\label{sec4}

We know that discrete gradients are not unique.
We provide two examples for this.
We show that the symmetric SOR method and 
block SOR method are equivalent to certain discrete gradient schemes.

\subsection{Symmetric SOR method}

We observe that the following function
\begin{align}
\nabla _\rmd f(\bx,\by) = 
\begin{bmatrix}
\cfrac{f(x_1,x_2,\dots,x_n)-f(y_1,x_2,\dots, x_n)}{x_1-y_1} \\
\vdots\\
\cfrac{f(y_1,\dots,y_{n-2},x_{n-1},x_n)-f(y_1,\dots,y_{n-2},y_{n-1}, x_n)}{x_n-y_n} \\
\cfrac{f(y_1,\dots,y_{n-1},x_n)-f(y_1,\dots,y_{n-1}, y_n)}{x_n-y_n}
\end{bmatrix} 
\end{align}
satisfies conditions \eqref{dgcond1} and \eqref{dgcond2}.
This function is a variant of the Itoh--Abe discrete gradient.
In particular, when $P=D^{-1}$, the corresponding discrete gradient scheme
reads as follows:
\begin{align} 
x_i^{(k+1)}  
=\frac{1}{1+\frac{h}{2}}
\left[ -ha_{ii}^{-1}\sum_{j<i} a_{ij}x_j^{(k)} + \left( 1-\frac{h}{2}\right) x_i^{(k)} 
  - ha_{ii}^{-1} \sum_{j>i} a_{ij}x_j^{(k+1)} + ha_{ii}^{-1}b_i \right].\label{ourscheme2} 
\end{align}
The scheme that uses \eqref{ourscheme1d} and \eqref{ourscheme2}  alternately
reduces to a symmetric solver, which is equivalent to the symmetric SOR method~\cite[Chapter 11.2]{go13}
with the relationship \eqref{condh}.
In this sense, the symmetric SOR method is connected to the gradient system \eqref{eq:ode} with $P=D^{-1}$.

\subsection{Block SOR method}
In this subsection, we show that the block SOR method~\cite{ar56,sa03,va00} is also equivalent to a certain discrete gradient scheme.
Let us suppose that $A$ is in the following $p\times p$ block partitioned form:
\begin{align}
A = \begin{bmatrix}
A_{11} & A_{12} & \cdots  & A_{1p} \\
A_{21} & A_{22} & \cdots & A_{2p} \\
\vdots & \vdots & \ddots & \vdots \\
A_{p1} & A_{p2} & \cdots & A_{pp} 
\end{bmatrix}.
\end{align}
Correspondingly, the vector $\bx$ is also divided into $p$ parts: $\bx = (\bx_1 ^\top, \bx_2 ^\top, \dots , \bx_p ^\top)^\top$.

We now extend the pointwise Itoh--Abe discrete gradient to a blockwise fashion.
We note that the $i$th component of the Itoh--Abe discrete gradient \eqref{eq:ia}
can be written as
\begin{align*}
\int_0^1 \partial _i f(x_1,\dots, x_{i-1}, \xi x_i+ (1-\xi) y_i, y_{i+1},\dots, y_n) \,\rmd \xi,
\end{align*}
because
\begin{align*}
&f(x_1,\dots, x_i, y_{i+1},\dots, y_n) - f(x_1,\dots, x_{i-1}, y_i,\dots, y_n) \\
&\quad =
\int_0^1 \frac{\rmd}{\rmd \xi} f(x_1,\dots, x_{i-1}, \xi x_i+ (1-\xi) y_i, y_{i+1},\dots, y_n) \,\rmd \xi \\
&\quad =
\int_0^1 \partial _i f(x_1,\dots, x_{i-1}, \xi x_i+ (1-\xi) y_i, y_{i+1},\dots, y_n) \cdot (x_i-y_i) \,\rmd \xi,
\end{align*}
where $\partial _i$ denotes the partial derivative with respect to the $i$th variable.
This observation motivates us to consider the following as a possible new discrete gradient:
\begin{align} 
 (\dg f(\bx,\by))_i 
 = \int_0^1 \nabla _i f(\bx_1,\dots, \bx_{i-1}, \xi \bx_i+ (1-\xi) \by_i, \by_{i+1},\dots, \by_p) \, \rmd \xi, \quad i= 1,\dots ,p, \label{eq:iablock} 
\end{align}
where the blockwise notation is employed;
the subscript $i$ denotes the $i$th block (not the $i$th component), and $\nabla _i$ denotes the gradient with respect to the $i$th block.

\begin{proposition}
Function \eqref{eq:iablock} falls into the category of discrete gradients
in the sense that
it satisfies \eqref{dgcond1} and \eqref{dgcond2}.
\end{proposition}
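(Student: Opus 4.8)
The plan is to verify the two defining conditions \eqref{dgcond1} and \eqref{dgcond2} directly, mirroring the telescoping argument already used for the pointwise Itoh--Abe discrete gradient. For the discrete chain rule \eqref{dgcond1}, I would first observe that, for each block index $i$, the integral representation together with the fundamental theorem of calculus gives
\begin{align*}
(\dg f(\bx,\by))_i^\top (\bx_i - \by_i)
&= \int_0^1 \nabla_i f(\bx_1,\dots,\bx_{i-1}, \xi\bx_i + (1-\xi)\by_i, \by_{i+1},\dots,\by_p)^\top (\bx_i-\by_i)\,\rmd\xi \\
&= f(\bx_1,\dots,\bx_{i-1},\bx_i,\by_{i+1},\dots,\by_p) - f(\bx_1,\dots,\bx_{i-1},\by_i,\by_{i+1},\dots,\by_p),
\end{align*}
exactly as in the scalar computation displayed just before the proposition, but now with the inner derivative replaced by the block gradient and the scalar increment $x_i-y_i$ replaced by the vector increment $\bx_i-\by_i$ under the inner product. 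Summing this identity over $i=1,\dots,p$ makes the right-hand side telescope: each term $f(\bx_1,\dots,\bx_{i-1},\by_i,\dots,\by_p)$ cancels against the corresponding term from the $(i-1)$st summand, leaving $f(\bx_1,\dots,\bx_p) - f(\by_1,\dots,\by_p) = f(\bx) - f(\by)$. Since $\dg f(\bx,\by)^\top(\bx-\by) = \sum_{i=1}^p (\dg f(\bx,\by))_i^\top(\bx_i-\by_i)$ by the block decomposition of the inner product, this is precisely \eqref{dgcond1}.

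For the consistency condition \eqref{dgcond2}, I would set $\by = \bx$ in \eqref{eq:iablock}. Then the argument of the integrand collapses: $\xi\bx_i + (1-\xi)\bx_i = \bx_i$ for every $\xi$, and the blocks $\bx_1,\dots,\bx_{i-1},\bx_i,\bx_{i+1},\dots,\bx_p$ reassemble the full vector $\bx$, so the integrand is the constant $\nabla_i f(\bx)$ and the integral over $[0,1]$ returns $\nabla_i f(\bx)$. Stacking these blocks for $i=1,\dots,p$ yields $\dg f(\bx,\bx) = \nabla f(\bx)$, which is \eqref{dgcond2}.

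I would also note briefly that \eqref{eq:iablock} is well defined: since $f$ is continuously differentiable, the integrand is continuous in $\xi$ on $[0,1]$, so each block integral exists; and unlike Gonzalez's discrete gradient there is no removable singularity to worry about because no difference quotient appears. The only mild subtlety, which I would mention but not belabour, is bookkeeping — keeping straight which blocks carry the $\bx$-values, which carry the $\by$-values, and which carry the interpolated value $\xi\bx_i+(1-\xi)\by_i$ as the index $i$ advances — so that the telescoping in the sum is transparent. There is no real obstacle here: the result is a direct blockwise lift of the scalar identity already established in the text, and both conditions follow from the fundamental theorem of calculus and a telescoping sum.
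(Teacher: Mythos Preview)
Your proof is correct and follows essentially the same approach as the paper: the fundamental theorem of calculus applied blockwise to verify \eqref{dgcond1} via a telescoping sum, and direct substitution $\by=\bx$ for \eqref{dgcond2}. The paper is slightly terser (it leaves the telescoping implicit), but the argument is the same.
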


\begin{proof}
Since
\begin{align*}
&f(\bx_1,\dots, \bx_i, \by_{i+1},\dots, \by_p) - f(\bx_1,\dots, \bx_{i-1}, \by_i,\dots, \by_p) \\
&\quad =
\int_0^1 \frac{\rmd}{\rmd \xi} f(\bx_1,\dots, \bx_{i-1}, \xi \bx_i+ (1-\xi) \by_i, \by_{i+1},\dots, \by_p) \,\rmd \xi \\
&\quad =
\int_0^1 \nabla _i f(\bx_1,\dots, \bx_{i-1}, \xi \bx_i+ (1-\xi) \by_i, \by_{i+1},\dots, \by_p) ^\top (\bx_i-\by_i) \,\rmd \xi,
\end{align*} 
condition \eqref{dgcond1} is satisfied.
Condition \eqref{dgcond2} is checked as follows:
\begin{align} 
(\dg f(\bx,\bx))_i = \int_0^1 \nabla _i f(\bx_1,\dots, \bx_{i-1},  \bx_i, \bx_{i+1},\dots, \bx_p) \, \rmd \xi
= (\nabla f(\bx))_i.
\end{align}
\end{proof}

For the energy function \eqref{eq:f},
we can calculate the new discrete gradient by substituting \eqref{eq:f} into 
\eqref{eq:iablock}:
\begin{align}
(\dg f(\bx,\by))_i = \sum_{j<i} A_{ij} \bx_j + A_{ii}\frac{\bx_i+\by_i}{2} + \sum_{j>i} A_{ij}\by_j - \bb_i,
\quad i=1,\dots,p.
\end{align}
The corresponding discrete gradient scheme for the case $P = D_\mathrm{b}^{-1}$, 
where $D_\mathrm{b} := \diag (A_{11}, \dots, A_{pp})$ is a block diagonal matrix,
is given by
\begin{align}
\frac{\bx_i^{(k+1)} - \bx_i^{(k)}}{h} 
=
-A_{ii}^{-1}\left( \sum_{j<i} A_{ij}\bx_j^{(k+1)} + A_{ii}\frac{\bx_i^{(k+1)}+\bx_i^{(k)}}{2} + \sum_{j>i} A_{ij}\bx_j^{(k)} - \bb_i \right),
\end{align}
or equivalently by
\begin{align} 
\bx_i^{(k+1)} 
 = \frac{1}{1+\frac{h}{2}}
\left[ -hA_{ii}^{-1}\sum_{j<i} A_{ij}\bx_j^{(k+1)} + \left( I-\frac{h}{2}\right) \bx_i^{(k)} 
 - h A_{ii}^{-1}\sum_{j>i} A_{ij}\bx_j^{(k)} + hA_{ii}^{-1}\bb_i \right].\label{ourscheme3}
\end{align}
Let $A = D_\mathrm{b} + L_\mathrm{b} + U_\mathrm{b}$,
where $L_\mathrm{b} $ and $U_\mathrm{b}$ are strictly lower and upper block triangular 
$n\times n$ matrices, respectively.
The iteration scheme given by \eqref{ourscheme3} is a stationary iterative method of  the form
\begin{align} 
\bx^{(k+1)}= G\bx^{(k)} + \bc,
\end{align}
where $G$ and $\bc$ are expressed as
\begin{align}
G_{\mathrm{BDG}}&= \left[ \left( I + \frac{h}{2}\right) D_\mathrm{b} + hL_\mathrm{b}\right]^{-1} \left[\left( I-\frac{h}{2}\right)D_\mathrm{b}-hU_\mathrm{b}\right],  \label{eq:bm}\\
\bc_{\mathrm{BDG}} &= h \left[ \left( I + \frac{h}{2}\right)D_\mathrm{b} + hL_\mathrm{b}\right]^{-1}  \bb \label{eq:bc}.
\end{align}
For the block SOR method, the iterative matrix and vector can be written as
\begin{align}
G_{\mathrm{BSOR}}&= \left( D_\mathrm{b} + \omega L_\mathrm{b}\right)^{-1} \left[(1-\omega)D_\mathrm{b}-\omega U_\mathrm{b}\right], \\
\bc_{\mathrm{BSOR}} &= \omega \left( D_\mathrm{b} + \omega L_\mathrm{b}\right)^{-1}  \bb.
\end{align} 

It is proved that the discrete gradient scheme \eqref{ourscheme3} is equivalent to the block SOR method
in the following sense.

\begin{proposition}
The scheme \eqref{ourscheme3} and the block SOR method are equivalent
in the sense that $G_{\mathrm{BDG}}=G_{\mathrm{BSOR}}$ and $\bc_{\mathrm{BDG}} = \bc_{\mathrm{BSOR}}$
if the relationship between the two parameters is given by \eqref{condh}.
\end{proposition}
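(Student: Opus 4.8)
The plan is to follow the proof of Theorem~\ref{th:eq} essentially verbatim: the assertion is a direct substitution of \eqref{condh} into the block formulas \eqref{eq:bm} and \eqref{eq:bc}. The point I would stress is that the block splitting $A = D_\mathrm{b} + L_\mathrm{b} + U_\mathrm{b}$ enters \eqref{eq:bm}--\eqref{eq:bc} in exactly the same formal way that the pointwise splitting $D + L + U$ enters \eqref{eq:m}--\eqref{eq:c}; nothing beyond the invertibility of $D_\mathrm{b}$ (already guaranteed, since $P = D_\mathrm{b}^{-1}$ is taken to be well defined) is used, so the pointwise algebra transfers without change and there is no genuinely new content in the block case.

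Concretely, I would first record the scalar identities coming from $h = 2\omega/(2-\omega)$:
\[
1 + \tfrac{h}{2} = \frac{2}{2-\omega}, \qquad 1 - \tfrac{h}{2} = \frac{2(1-\omega)}{2-\omega}, \qquad h = \frac{2\omega}{2-\omega}.
\]
Substituting these into the two brackets appearing in $G_{\mathrm{BDG}}$ exposes the common scalar factor $2/(2-\omega)$:
\[
\left(1 + \tfrac{h}{2}\right)D_\mathrm{b} + hL_\mathrm{b} = \frac{2}{2-\omega}\bigl(D_\mathrm{b} + \omega L_\mathrm{b}\bigr), \qquad
\left(1 - \tfrac{h}{2}\right)D_\mathrm{b} - hU_\mathrm{b} = \frac{2}{2-\omega}\bigl((1-\omega)D_\mathrm{b} - \omega U_\mathrm{b}\bigr).
\]
This factor cancels between the inverse and the second bracket, leaving $G_{\mathrm{BDG}} = (D_\mathrm{b} + \omega L_\mathrm{b})^{-1}\bigl((1-\omega)D_\mathrm{b} - \omega U_\mathrm{b}\bigr) = G_{\mathrm{BSOR}}$. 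For the vector part, the same factorisation turns the inverse in \eqref{eq:bc} into $\tfrac{2-\omega}{2}(D_\mathrm{b} + \omega L_\mathrm{b})^{-1}$, and the leading factor $h = 2\omega/(2-\omega)$ then collapses the scalars to $\omega$, yielding $\bc_{\mathrm{BDG}} = \omega(D_\mathrm{b} + \omega L_\mathrm{b})^{-1}\bb = \bc_{\mathrm{BSOR}}$.

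I do not expect a real obstacle: the computation is routine and structurally identical to the pointwise case. The only thing worth checking is that $2 - \omega \neq 0$, so that the substitution makes sense — equivalently, that \eqref{condh} is invoked only for $\omega \in (0,2)$, on which it is a bijection onto $h \in (0,\infty)$. This is the same admissibility proviso already implicit in Theorem~\ref{th:eq}, and it coincides with the classical convergence range of the (block) SOR method.
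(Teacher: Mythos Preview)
Your proposal is correct and follows exactly the approach the paper indicates: the paper simply states that the proof is similar to that of Theorem~\ref{th:eq}, whose proof in turn is the one-line remark that one substitutes \eqref{condh} into the iteration matrix and vector. Your explicit display of the scalar factor $2/(2-\omega)$ and its cancellation is more detailed than what the paper provides, but the content is identical.
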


The proof for this proposition is similar to that of Theorem~\ref{th:eq}.

\section{Concluding remarks}
\label{sec:cr}

In this paper, 
we described the connection between stationary iterative methods with 
the discrete energy-dissipation property $f(\bx^{(k+1)} )\leq f(\bx^{(k)})$ and gradient systems (see Fig.~\ref{fig:newconnection}).
The focus of the discussion was the equivalence between the SOR-type methods and the discrete gradient schemes of the Itoh--Abe type.
It is still not clear whether all stationary iterative methods with the discrete energy-dissipation property
can be explained by the connection (which is why we used the dashed line in Fig.~\ref{fig:newconnection}); 
the connection can explain some famous methods such as the SOR, symmetric SOR and block SOR methods.
We found a new way to derive
these methods; these methods monotonically decrease the energy function \eqref{eq:f}, and 
the relaxation parameter $\omega$ is related to the stepsize \eqref{condh}.
It is hoped that the connection discussed in this paper sheds new light on the study of stationary 
iterative methods.
We obtained \eqref{eq:iablock}, the new discrete gradient, while studying the block
SOR method. Discrete gradients similar to \eqref{eq:iablock} will be tested for other gradient systems and nonlinear optimisation problems.
\begin{figure}[htbp]
\centering
\begin{tikzpicture}[
box0/.style={
	draw=white,
	text width=12em,
	minimum height=5em,
	rectangle split,
	rectangle split parts=3,
	rectangle split draw splits=false,
	},
box1/.style={
	rectangle,
	dashed,
	rounded corners,
	draw=black, very thick,
	text width=12em,
	minimum height=5em,
	},
box2/.style={
	rectangle,
	rounded corners,
	draw=black, very thick,
	text width=15em,
	minimum height=5em,
	},
box3/.style={
	draw=white,
	text width=12em,
	minimum height=5em,
	rectangle split,
	rectangle split parts=3,
	rectangle split draw splits=false,
	},
]
\node[box0] (l1) {$\cdot$\ SOR method\nodepart{two} $\cdot$\  symmetric SOR method \nodepart{three} $\cdot$\  block SOR method};
\node[box1, ultra thick, fit={(l1)($(l1.north)+(0,0.5em)$)($(l1.east)+(0.5em,0)$)($(l1.west)+(-0.5em,0)$)($(l1.south)+(0,-0.5em)$)},align=left] (l2) {};
\node[box2,ultra thick, fit={(l2)($(l2.north)+(0,1.5em)$)($(l2.east)+(0.5em,0)$)($(l2.west)+(-0.5em,0)$)($(l2.south)+(0,-0.5em)$)},align=left] (l3) {};
\node[below right, inner sep=5pt] at (l3.north west) {$\{ G \in \bbR^{n\times n} \ | \ f(\bx^{(k+1)}) \leq f(\bx^{(k)}) \}$};
\node[box2, fit={(l1)($(l3.north)+(0,1.5em)$)($(l3.east)+(1em,0)$)($(l3.west)+(-0.5em,0)$)($(l3.south)+(0,-0.5em)$)},align=left] (l4) {};
\node[below right, inner sep=5pt] at (l4.north west) {$\{ G \in \bbR^{n\times n} \ | \ G = M^{-1} N \ (A=M-N) \}$};
\node[above= 0.7cm of l4] () {Stationary iterative methods}; 
\node[above= 0.1cm of l4] () {($\bx^{(k+1)} = G \bx^{(k)} + \bc$)}; 
\node[box2,minimum height=6em,ultra thick, right=4cm of l1,align=left] (r1) {};
\node[below right, inner sep=5pt,	rectangle split,
	rectangle split parts=3,] at (r1.north west) { \leftline{$\{P  \in \bbR^{n\times n} \ | \ \frac{\rmd}{\rmd t } f(\bx) \leq 0 \}$} };
\node[box2, fit={(r1)($(r1.north)+(0,3.5em)$)($(r1.west)+(-0.5em,0)$)($(r1.east)+(0.5em,0)$)($(r1.south)+(0,-1.5em)$)},align=left] (r3) {};
\node[below right, inner sep=5pt] at (r3.north west) {$\{ P \in \bbR^{n\times n} \ | \ \det P \neq 0  \}$ };
\node[above= 0.7cm of r3] () {Continuous dynamical systems}; 
\node[above= 0.1cm of r3] () {($\dot{\bx} = -P (A\bx - \bb)$)}; 
\path[<->,ultra thick] (l2.east) edge node [above=0.4cm] {\hspace{1em} DG method} node [above=0.03cm] {\hspace{1em}  ($h>0$)} (r1.west) + (0,1em);
\end{tikzpicture}
\caption{Connection between gradient systems and stationary iterative methods with the energy-dissipation property
$f(\bx^{(k+1)} )\leq f(\bx^{(k)})$.
`DG' has been used as an abbreviation for discrete gradient.
The set $\{P  \in \bbR^{n\times n} \ | \ \frac{\rmd}{\rmd t } f(\bx) \leq 0 \}$ coincides with 
the set of positive definite matrices of size $n$-by-$n$.
}
\label{fig:newconnection}
\end{figure}

\bibliographystyle{plain}
\bibliography{references}
\end{document}